\documentclass[pdftex, a4paper, 12pt, oneside]{amsart}

\usepackage{geometry}
\geometry{left=25mm, right=25mm, bottom=35mm}

\usepackage[cp1250]{inputenc}

\usepackage{alltt}
\usepackage{euler}
\usepackage{amsfonts}
\usepackage{amscd}
\usepackage{graphicx}
\usepackage{lpic}
\usepackage{longtable}
\usepackage{url}
\usepackage{import}
\usepackage{amsmath, amscd, amsfonts, amsthm, amssymb, mathrsfs}
\usepackage[sc]{mathpazo}
\usepackage{enumitem, textcomp, setspace}

\usepackage{mathtools}

\usepackage{tikz}
\usetikzlibrary{intersections}
\usepackage{pgfplots}
\usepgfplotslibrary{external}

\allowdisplaybreaks

\numberwithin{equation}{section}

\theoremstyle{plain}
\newtheorem{theorem}{Theorem}[section]
\newtheorem{proposition}[theorem]{Proposition}
\newtheorem{lemma}[theorem]{Lemma}

\theoremstyle{definition}

\newtheorem{remark}[theorem]{Remark}

\usepackage{booktabs}

\usepackage[pagebackref=false]{hyperref}

\usepackage[all]{hypcap}

\definecolor{newblue}{rgb}{0.27, 0.32, 0.86}
\definecolor{newred}{rgb}{0.86, 0.32, 0.27}
\hypersetup{
	pagebackref=true,
	colorlinks=true,       
	linkcolor=newred,          
	citecolor=newblue,        
	filecolor=magenta,      
	urlcolor=newblue           
}

\usepackage{caption}
\usepackage{wrapfig}
\providecommand{\keywords}[1]{\textbf{\textit{Key words and phrases:}} #1}
\providecommand{\subjclass}[1]{\textbf{\textit{2020 Mathematics Subject Classification:}} #1}


\title[\tiny Rigid and shaky hard link diagrams]{Rigid and shaky hard link diagrams}
\author{Micha\l \;Jab\l onowski}

\address{Institute of Mathematics, Faculty of Mathematics, Physics and Informatics, University of Gda\'nsk, 80-308 Gda\'nsk, Poland}
\email{michal.jablonowski@gmail.com}

\subjclass{57K10 (primary)} 

\keywords{hard diagram, rigid hard diagram, shaky hard diagram}

\date{\today}

\graphicspath{ {fig16/} }

\begin{document}
	
\maketitle

\begin{abstract}

In this study of the Reidemeister moves within the classical knot theory, we focus on hard diagrams of knots and links, categorizing them as either rigid or shaky based on their adaptability to certain moves. We establish that every link possesses a diagram that is a rigid hard diagram and we provide an upper limit for the number of crossings in such diagrams. Furthermore, we investigate rigid hard diagrams for specific knots or links to determine their rigid hard index. In the topic of shaky hard diagrams, we demonstrate the existence of such diagrams for the unknot and unlink, regardless of the number of components, and present examples of shaky hard diagrams.

\end{abstract}


\section{Introduction}

The Reidemeister moves introduced nearly a century ago (in 1926), are studied for their varying aspects and implications. B. Trace \cite{Tra83} noted that two diagrams representing the same knot are connected only by the second and third Reidemeister moves (but not the first) if, and only if, they share the same writhe and winding number (as a curve). O. Oestlund \cite{Ost01} and Hagge's \cite{Hag05} work indicates that for every knot, there exists a pair of diagrams requiring all three move types for transition. A. Coward \cite{Cow09} established that even when all three moves are essential, they can be executed in a specific sequence: initially, only the first moves that increase crossing numbers, followed by the second moves that also increase crossings, then exclusively the third moves, and finally the second moves again, but this time decreasing the number of crossings. M. Lackenby \cite{Lac15} has shown that any trivial knot diagram with $c$ crossings can be transformed into a simple diagram using no more than $(236c)^{11}$ Reidemeister moves.
\par
Hard unknots and unlinks, representing trivial knots and links, require increased complexity before they can be simplified to their minimal diagrams (in terms of number of crossings). In classical knot theory, which examines circles in $3$-dimensional space and their generic diagrams on a $2$-sphere, these concepts have a long history dating back to Goeritz's example \cite{Goe34}. More recent research connects hard unknots to DNA recombination studies (see, e.g. \cite{KauLam06}, \cite{KauLam07}) and tests the accuracy of new upper bounds on the number of Reidemeister moves required to simplify an unknot (see, e.g. \cite{Kau16}). Hard unknots continue to be a central topic in recent research papers (for example \cite{BCLMMSSS23}).


\section{Reidemeister moves and hard diagrams}

From \cite{AleBri26, Rei27} we know that any two marked graph diagrams representing the same type of link are related by a finite sequence of Reidemeister local moves presented in Figure\;\ref{Rmoves} and their mirror moves (and an isotopy of the diagram in $\mathbb{S}^2$).

\begin{figure}[ht]
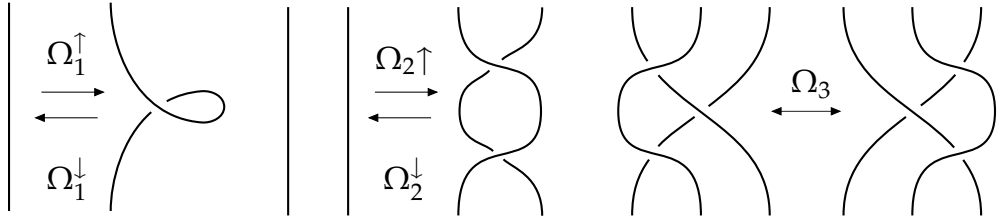

\begin{center}
	\begin{lpic}[]{Rmoves(13cm)}
		\lbl[b]{10,24;$\Omega_1^{\uparrow}$}
		\lbl[t]{10,10;$\Omega_1^{\downarrow}$}
		\lbl[b]{67,24;$\Omega_2{\uparrow}$}
		\lbl[t]{67,10;$\Omega_2^{\downarrow}$}
		\lbl[b]{136,20;$\Omega_3$}
	\end{lpic}
	\caption{The Reidemeister moves.\label{Rmoves}}
\end{center}
\end{figure}

Denote the move $\Omega_1$ as any of $\Omega_1^{\uparrow}$ or $\Omega_1^{\downarrow}$ moves; and denote the move $\Omega_2$ as any of $\Omega_2^{\uparrow}$ or $\Omega_2^{\downarrow}$ moves, also denote the $\Omega_3$-type move as any of $\Omega_3$ or mirror of $\Omega_3$ move. Let us recall the following dependencies between Reidemeister-type moves. The mirror move to the move $\Omega_1$ can be obtained by the moves $\Omega_1, \Omega_2$ and planar isotopy. The mirror move to the move $\Omega_2$ can be obtained by the move $\Omega_2$ and planar isotopy. The mirror move to the move $\Omega_3$ can be obtained by the moves $\Omega_2, \Omega_3$ and planar isotopy.

A \emph{hard diagram} of a link $L$ is a reduced link diagram of $L$ having more than $c(L)$ crossing such that in order to obtain a minimal diagram of $L$ by the Reidemeister moves you have to use at least one move that increases the number of crossings ($\Omega_1^{\uparrow}$ or $\Omega_2^{\uparrow}$). In this paper, the \emph{primeness} of a diagram is considered with respect to the connected sum on the sphere, and the \emph{minimality} of a diagram is considered with respect to the total number of its crossings.

\begin{proposition}
	Any hard diagram is a non-alternating diagram.
\end{proposition}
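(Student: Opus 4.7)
The plan is to argue by contradiction and invoke the Kauffman--Murasugi--Thistlethwaite theorem (the resolution of one of the Tait conjectures), which asserts that any reduced alternating diagram of a link $L$ realizes the crossing number $c(L)$. Since a hard diagram is, by definition, reduced and has strictly more than $c(L)$ crossings, these two facts are in direct tension once alternation is assumed.

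More concretely, I would first let $D$ be a hard diagram of a link $L$ and suppose for contradiction that $D$ is alternating. From the definition of a hard diagram, $D$ is reduced and satisfies $c(D) > c(L)$. Applying the Kauffman--Murasugi--Thistlethwaite theorem to the reduced alternating diagram $D$ would then yield $c(D) = c(L)$, contradicting the strict inequality. Hence no hard diagram can be alternating.

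The argument is essentially a one-line appeal to a deep theorem, so there is no real obstacle beyond ensuring that the hypotheses of that theorem are met: the diagram must be reduced (which is built into the definition of a hard diagram in this paper), and one must use the version of the theorem valid for links on $\mathbb{S}^2$ (which is the classical setting considered here). No case analysis on the specific Reidemeister move that increases crossings is needed, since the obstruction comes purely from the crossing-number inequality $c(D) > c(L)$ that is part of the definition.
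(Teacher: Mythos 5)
Your argument is correct and is essentially the paper's own proof: both rest on the Kauffman--Murasugi--Thistlethwaite resolution of the Tait conjecture that a reduced alternating diagram is minimal, which contradicts $c(D)>c(L)$. The only cosmetic difference is that the paper first splits off the case where $L$ is a non-alternating link, whereas your contradiction argument handles both cases at once.
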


\begin{proof}
	Let $D$ be a hard diagram of a link $L$. If $L$ is a non-alternating link then by definition $D$ must be non-alternating. If $L$ is an alternating link then any reduced alternating diagram of $L$ is well-known to be minimal (first two Tait conjectures). Therefore, the diagram $D$ cannot be alternating because it is not a minimal diagram for $L$.
\end{proof}

The family of hard diagrams can be further divided into rigid hard and shaky hard families. The rigid hard diagram is a hard diagram such that there is no opportunity to make $\Omega_3$-type move, and it is called shaky otherwise.

\section{Rigid hard diagram}

We recall the known rigid hard diagrams.

\begin{theorem}[\cite{Jab19}]\label{thm1}
	Up to the mirror image, the only minimal rigid hard prime classical unlink diagram with two components is $h8$, and the only minimal rigid hard prime classical unlink diagram with three components is $h12$ (see Figure\;\ref{hard09}). Furthermore, the only minimal rigid hard prime classical unknot diagrams are the four diagrams $h9a, \ldots, h9d$ shown also in Figure\;\ref{hard09}.
\end{theorem}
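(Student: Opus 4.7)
The plan is to split the argument into two directions for each of the three link types. First, I would verify directly that each of the listed diagrams (namely $h8$ for the 2-component unlink, $h12$ for the 3-component unlink, and $h9a,\ldots,h9d$ for the unknot) is rigid and hard: rigidity is checked by scanning every triangular face of the diagram to confirm that no $\Omega_3$-type move is available, and hardness is checked by first confirming that no $\Omega_1^{\downarrow}$ or $\Omega_2^{\downarrow}$ move is applicable, so that every simplification sequence must start with a crossing-increasing move. For $h9a,\ldots,h9d$ the second step carries the most content, and I would handle it case-by-case, tracing whatever few downward moves become available after local inspection and showing that the resulting smaller diagrams still fail to reduce to the trivial one without an intermediate $\Omega^{\uparrow}$ step.

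For the converse direction, the plan is to show that nothing else qualifies. Fix one of the three link types and let $n$ be the candidate crossing number ($n = 8, 9, 12$ respectively). I would first give a lower-bound argument showing that no rigid hard prime diagram of this link type has fewer than $n$ crossings; for the unknot this matches the classical fact that any hard unknot diagram has at least $9$ crossings, and for the two unlinks an analogous parity/region count should force $n\ge 8$ and $n\ge 12$ respectively. Then I would enumerate, up to sphere isotopy and mirror image, all reduced prime diagrams of the given link type with exactly $n$ crossings. This reduces to listing connected $4$-valent planar graphs with $n$ vertices on $\mathbb{S}^2$, assigning crossing information at each vertex, filtering by link type (via for instance the Jones or Kauffman bracket polynomial, together with explicit simplification), and then discarding the non-reduced or composite candidates. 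For each surviving diagram other than those named, I would exhibit either an applicable $\Omega_3$ move (showing it is not rigid) or a non-increasing simplification sequence (showing it is not hard).

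The main obstacle is the enumeration-and-certification step, which is a combinatorial explosion best handled by computer. The hardness certificate is the delicate half: certifying that a diagram is \emph{not} hard is easy, since one exhibits a simplifying sequence using only $\Omega_1^{\downarrow}$, $\Omega_2^{\downarrow}$, and $\Omega_3$; but certifying that a diagram \emph{is} hard requires ruling out all such sequences, and $\Omega_3$ moves (being crossing-preserving) can in principle be chained arbitrarily. One tames this by noting that at any fixed crossing count the set of diagrams reachable by $\Omega_3$-chains is finite, so a breadth-first search alternating downward moves with $\Omega_3$-orbit exploration terminates. Since the statement is quoted verbatim from \cite{Jab19}, the cleanest route is to appeal to the exhaustive classification carried out there rather than to reproduce the case analysis here.
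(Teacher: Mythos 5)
The paper gives no proof of this statement at all: it is imported verbatim from \cite{Jab19}, and your outline --- exhaustive enumeration of reduced prime spherical diagrams at each crossing number, verification of link type, and certification of rigidity and hardness by checking available moves and searching the finite graph of diagrams reachable by $\Omega_3$-chains and downward moves --- is essentially the computational classification carried out in that reference, so you are taking the same route as the source. One small simplification you could note: for a \emph{rigid} candidate the hardness certificate is immediate once you have checked that no $\Omega_1^{\downarrow}$, $\Omega_2^{\downarrow}$, or $\Omega_3$-type move is available anywhere, since the diagram is then stuck and every escape must increase the crossing number, so there are no downward sequences to trace for $h9a,\ldots,h9d$; also, your ``classical fact'' lower bound of $9$ crossings for hard unknot diagrams is itself a product of the same enumeration rather than an independent input.
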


\begin{figure}[ht]
	\includegraphics[width=0.325\textwidth]{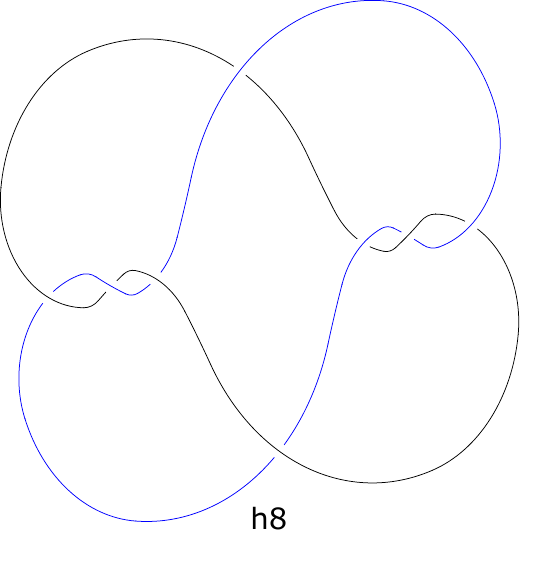}
	\includegraphics[width=0.325\textwidth]{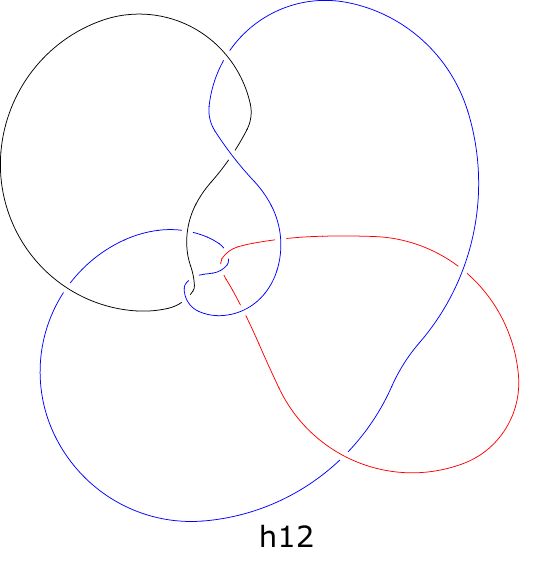}
	\includegraphics[width=0.325\textwidth]{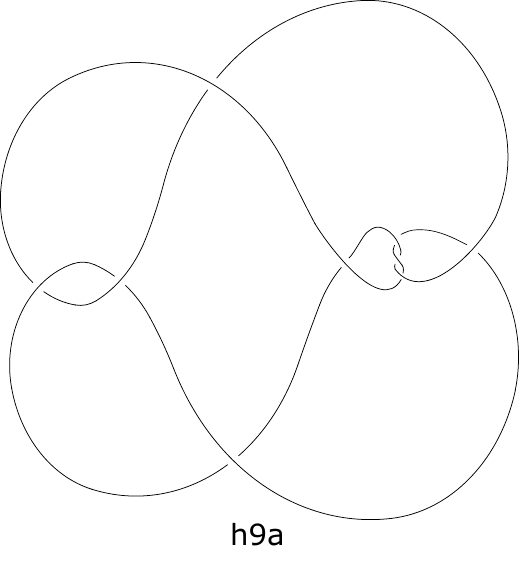}\\
	\includegraphics[width=0.325\textwidth]{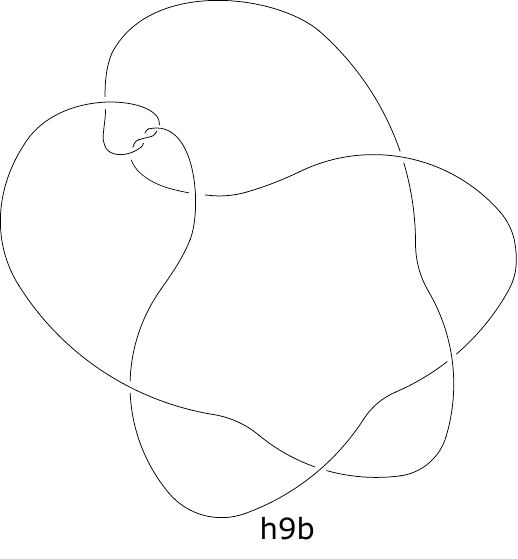}
	\includegraphics[width=0.325\textwidth]{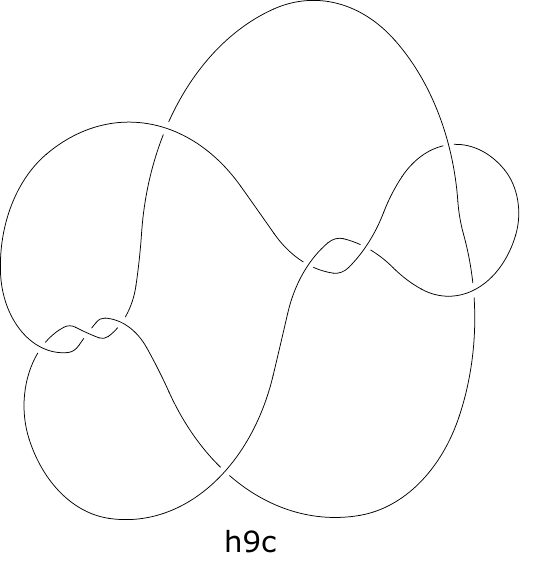}
	\includegraphics[width=0.325\textwidth]{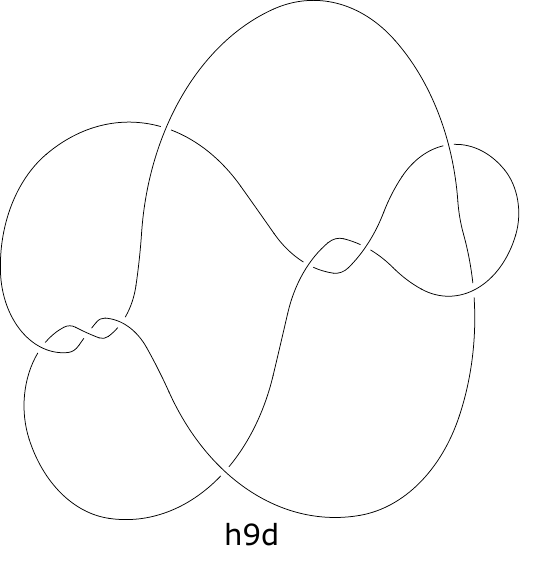}
	\caption{Minimal hard prime unlink diagrams.\label{hard09}}
\end{figure}

From the computational checking of all spherical diagrams up to several crossings, we conclude the following.

\begin{lemma}\label{lem1}
The diagram $7s$ shown in Figure\;\ref{7sumC} is a minimal unknot diagram such that all regions with the opportunity to make $\Omega_2^{\downarrow}$ move or $\Omega_3$-type move share exactly one edge (shown in dashed line). 
\end{lemma}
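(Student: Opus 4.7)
The plan is to split the lemma into three independent verifications and address each in turn: (a) that the diagram $7s$ represents the unknot, (b) that every region of $7s$ admitting an $\Omega_2^{\downarrow}$ or $\Omega_3$-type move contains the dashed edge on its boundary, and (c) that no unknot diagram with fewer than seven crossings enjoys this property.

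For (a) I would exhibit an explicit Reidemeister sequence unknotting $7s$ (which, in the spirit of hard diagrams, must begin with a crossing-increasing move), and as a sanity check evaluate the Kauffman bracket or Jones polynomial of $7s$ and confirm it agrees with that of the trivial diagram.

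For (b) the argument is purely local and combinatorial. I would record the $4$-valent planar graph underlying $7s$ together with the over/under data at every crossing, for instance as a PD code. For each face of this graph I then run a two-case test: if the face is a bigon whose two bounding crossings have opposite over/under types, an $\Omega_2^{\downarrow}$ move is available there; if the face is a triangle in which one of the three strands passes entirely over (or entirely under) the other two, then an $\Omega_3$-type move is available, and by the dependency remarks following Figure\;\ref{Rmoves} this single check already absorbs the mirror $\Omega_3$. Running through the finite list of faces and reading off the boundary edges of the successful ones, one verifies directly from the figure that each such face contains the dashed edge.

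For (c) the argument is the computer enumeration alluded to just before the statement of the lemma: list all reduced spherical unknot diagrams with at most six crossings, up to mirror image and homeomorphism of $\mathbb{S}^2$, and for each of them exhibit two regions supporting $\Omega_2^{\downarrow}$ or $\Omega_3$-type moves that do not share a common edge. I expect this to be the main obstacle, not because the combinatorics are large — the list of small unknot diagrams is very short — but because the bookkeeping must be airtight to guarantee no spherical diagram is missed, and the degenerate small-crossing cases (where the candidate set of regions might be empty, forcing a convention on what minimality means in that regime) have to be handled explicitly. In practice the counts match the existing tabulation of small spherical diagrams, and presenting the outcome as a short certifying table of witness pairs of regions makes the verification independently checkable without rerunning the search.
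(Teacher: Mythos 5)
Your proposal is correct and matches the paper's approach: the paper offers no written proof of this lemma, asserting only that it follows ``from the computational checking of all spherical diagrams up to several crossings,'' which is exactly the exhaustive verification you outline. Your decomposition into unknottedness, the local face-by-face test for $\Omega_2^{\downarrow}$ and $\Omega_3$-type opportunities, and the minimality search over spherical diagrams with fewer crossings is a faithful (and more explicit) account of what that computation must certify.
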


\begin{remark}
	From the Lem.\;\ref{lem1} we know that we can drop the word "prime" in Thm.\;\ref{thm1} because all the diagrams there have less than $14$ crossing, so the non-prime diagrams with the assumptions of the Theorem are not minimal. 
\end{remark}

\begin{figure}[ht]
	\includegraphics[width=0.5\textwidth]{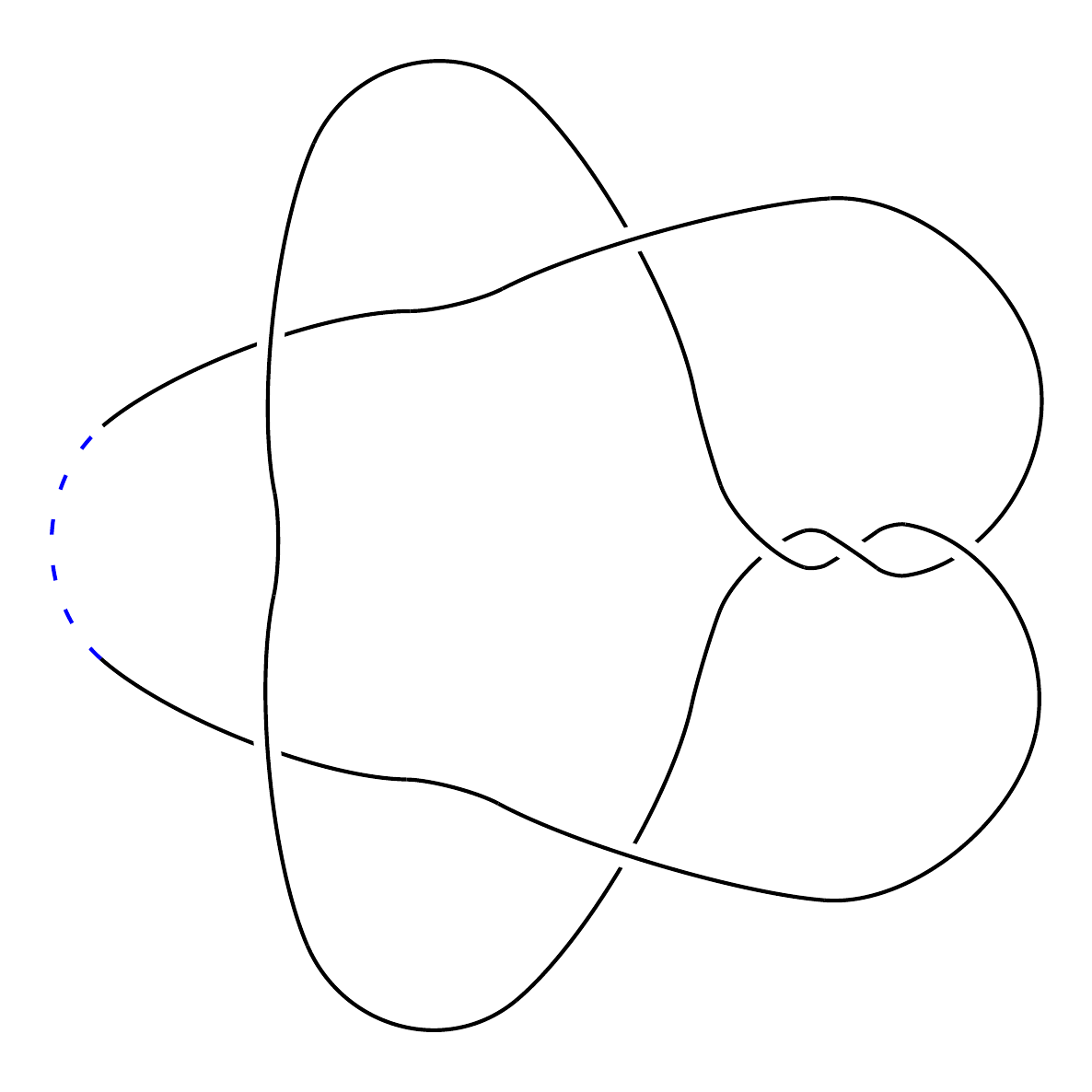}
	\caption{The diagram $7s$.\label{7sumC}}
\end{figure}

In a Figure\;\ref{triangle_regions} we have all types of local neighborhoods of a triangular region in a link diagram we have exactly two cases (in the first row) where a triangular region has the opportunity to make the $\Omega_3$-type move, call it $\Omega_3$\emph{-triangle}.

\begin{theorem}\label{thm2}
	Any link $L$ has a diagram $D$ that is a rigid hard diagram. Moreover, we can choose $D$ such that we have: 
	$$ \#\text{crossings}(D)\leq 7\cdot tri(L) + c(L),$$ for any non-split non-trivial link $L$, where $tri(L)$ is the minimal number of $(\Omega_3\text{-triangles})$ in a minimal diagram of $L$, or is equal to $1$ where there are no such triangles.
\end{theorem}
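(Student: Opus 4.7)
The plan is to produce $D$ by inserting copies of the diagram $7s$ from Lem.\;\ref{lem1} into a carefully chosen minimal diagram of $L$. First I would pick a minimal diagram $D_0$ of $L$ that realizes $tri(L)$, i.e.\ whose number of $\Omega_3$-triangles is as small as possible. Because $D_0$ is minimal, it admits no $\Omega_2^{\downarrow}$ move whatsoever: such a move would decrease the crossing count below $c(L)$. Hence the only ``bad'' local configurations in $D_0$ are its $tri(L)$ many $\Omega_3$-triangles, and nothing else.

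For each of those triangles I would single out one of its three boundary edges and perform a connected sum on $\mathbb{S}^2$ with a fresh copy of $7s$, identifying the chosen edge with the dashed edge appearing in Lem.\;\ref{lem1}. In the degenerate case where $D_0$ has no $\Omega_3$-triangle (and $tri(L)=1$ by the stated convention) I would still perform one such insertion on an arbitrary edge of $D_0$, purely to force $\#\text{crossings}(D)>c(L)$. Since $7s$ is an unknot diagram, each insertion leaves the underlying link type unchanged, and it contributes exactly $7$ new crossings, so the final diagram $D$ represents $L$ and satisfies $\#\text{crossings}(D)=c(L)+7\cdot tri(L)$.

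To finish I would verify rigidity and hardness simultaneously. By Lem.\;\ref{lem1}, the only faces of the standalone $7s$ that support an $\Omega_2^{\downarrow}$ or $\Omega_3$-type move are incident to the dashed edge; at the splice this edge is removed and those faces merge with the two faces of $D_0$ meeting the chosen triangle edge, producing strictly larger faces that no longer have the bigon or triangle shape required for a reduction. Symmetrically, each originally chosen $\Omega_3$-triangle of $D_0$ now has one of its three edges replaced by a long arc running through a copy of $7s$, so it too is no longer triangular. Combined with the minimality of $D_0$ this shows that $D$ contains no $\Omega_3$-triangle and no $\Omega_2^{\downarrow}$-bigon, so it is rigid hard: since $\#\text{crossings}(D)>c(L)$, any passage to a minimal diagram must begin with an $\Omega_1^{\uparrow}$ or $\Omega_2^{\uparrow}$ move.

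The main obstacle lies in the last paragraph, namely confirming that the two face-mergings performed at each splice genuinely yield faces of at least four sides rather than accidental new triangles or bigons straddling the glue. This forces one to inspect the precise shape of the two faces of $7s$ adjacent to its dashed edge in Figure\;\ref{7sumC} and to combine that with the triangular shape of the target face in $D_0$; the combinatorics is delicate but finite, and I expect this is exactly what the lemma was designed to supply, so the verification reduces to a case analysis at a single local picture rather than any global argument.
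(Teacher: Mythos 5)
Your construction for the main case coincides with the paper's: take a minimal diagram realizing $tri(L)$, and form a connected sum with a copy of $7s$ along one boundary edge of each $\Omega_3$-triangle (or along an arbitrary edge if there is none), then invoke Lemma\;\ref{lem1} to see that every region of $7s$ supporting an $\Omega_2^{\downarrow}$ or $\Omega_3$-type move is incident to the dashed edge and hence gets merged into a strictly larger face, while each targeted triangle is likewise destroyed. The crossing count $7\cdot tri(L)+c(L)$ and the rigidity/hardness argument are the same as in the paper, and you are right that the entire burden is carried by the local property of $7s$ stated in Lemma\;\ref{lem1}; the paper asserts the resulting diagram has ``no new region having less than $4$ edges'' with exactly the face-merging reasoning you sketch.

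The one genuine gap is that your proof does not establish the first sentence of the theorem, which asserts that \emph{every} link has a rigid hard diagram. Your construction needs a minimal diagram with at least one crossing (indeed at least one edge to splice along), so it says nothing about the unknot or trivial links, and it also does not address split links, where a minimal diagram is disconnected and the Euler-characteristic/face bookkeeping per component matters. The paper disposes of these cases first: for trivial links it takes the diagrams $h8$, $h9a,\ldots,h9d$, $h12$ of Theorem\;\ref{thm1} and suitable connected sums of them to reach any number of components, and for split links it applies the construction to each non-split factor and takes the disjoint union of the resulting rigid hard diagrams. You should add these two reductions; once you restrict to a connected minimal diagram of a non-trivial link (which necessarily has at least two crossings), your argument goes through as written. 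A minor further point: hardness also requires that no $\Omega_1^{\downarrow}$ move be available, which follows because a minimal diagram is reduced and the splicing with $7s$ creates no monogons, but this deserves a sentence.
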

\begin{proof}
	For a case where the link $L$ is trivial we can take diagrams from Thm.\;\ref{thm1} and their proper amount of connected sums to obtain a rigid hard diagram with the desired number of components. Assume, from now that $L$ is non-trivial. If it is a split link then we can obtain the following procedure for every sub-diagram component obtaining a rigid hard diagram as a disjoint sum of rigid hard diagrams of non-split components. Therefore, we assume that we have a minimal diagram $D'$ for $L$ which is a connected diagram and has at least two crossings. 

\begin{figure}[ht]
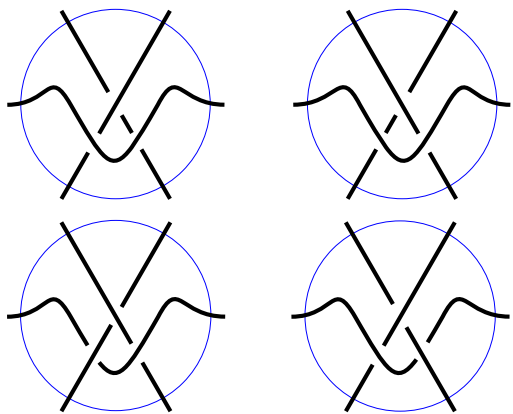

	\begin{center}
		\begin{lpic}[]{triangle_regions(14cm)}

		\end{lpic}
		\caption{Types of triangular regions.\label{triangle_regions}}
	\end{center}
\end{figure}

	Consider $D$ as a connected sum of a minimal diagram $D'$ for $L$ with the diagram $7s$ (along the dashed line) inside every triangular region of $D'$ where there is an opportunity to make $\Omega_3$-type move with the edge used in the sum edge being one of the edges bounding the triangular region. If there is no such region we make only one connected sum with the arbitrary edge of $D'$ and the inequality is obtained. From now on let us assume that there is a least one triangular region with the opportunity to make the $\Omega_3$-type move, call it $\Omega_3$-triangle. Then, we obtain no new region having less than $4$ edges in the boundary, so the diagram of the sum is a rigid hard diagram.
	\par
	The diagram $7s$ has $7$ crossings, therefore in $D$ we have $$7\cdot\#(\Omega_3\text{-triangles})+\# \text{crossings}(D')$$ crossings in $D$.
	 
\end{proof}

We can derive the following general upper bound (without knowledge of the number of $\Omega_3\text{-triangles}$).

\begin{proposition}\label{cor1}
	Any link $L$ has a diagram $D$ that is a rigid hard diagram. Moreover, we can choose $D$ such that we have: 
	$$ \#\text{crossings}(D)\leq 8\cdot c(L),$$ for any non-split non-trivial link $L$.
\end{proposition}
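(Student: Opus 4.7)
The plan is to deduce the corollary directly from Theorem~\ref{thm2} by reducing everything to the single inequality $tri(L) \leq c(L)$, for then the rigid hard diagram $D$ produced by Theorem~\ref{thm2} satisfies
\[
\#\text{crossings}(D) \;\leq\; 7\, tri(L) + c(L) \;\leq\; 7\, c(L) + c(L) \;=\; 8\, c(L),
\]
which is precisely the claim.

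To bound $tri(L)$ by $c(L)$, I would fix a minimal diagram $D'$ of $L$ realizing $tri(L)$ and treat it as a 4-regular planar graph with $V = c(L)$ vertices, $E = 2\, c(L)$ edges and $F = c(L) + 2$ faces. Euler's formula together with the face-handshake identity $\sum_k k\, F_k = 4\, c(L)$ immediately yields $F_3 \leq 4\, c(L)/3$ for the total number of triangular faces. I would then refine this using the local structure recorded in Figure~\ref{triangle_regions}, where only the two configurations in the top row are genuine $\Omega_3$-triangles: a double count of crossing--triangle incidences, exploiting that the over/under data at each crossing rules out at least one of the four adjacent corner regions from being an $\Omega_3$-triangle, pushes the incidence count down to $3\, tri(L) \leq 3\, c(L)$, giving $tri(L) \leq c(L)$. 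The degenerate case when $D'$ has no $\Omega_3$-triangle is immediate from the convention $tri(L) = 1$ combined with $c(L) \geq 1$ for non-split, non-trivial links.

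The main obstacle is precisely this refinement step. The crude Euler bound $F_3 \leq 4\, c(L)/3$, substituted directly into Theorem~\ref{thm2}, would only yield a constant of $31/3 \approx 10.33$ in place of $8$; sharpening to $tri(L) \leq c(L)$ demands the structural input from Figure~\ref{triangle_regions}, namely that the $\Omega_3$-compatibility condition on the over/under data at each crossing is strong enough to forbid all four adjacent triangular faces from simultaneously being $\Omega_3$-triangles. Once this local assertion is confirmed, the corollary follows by substitution.
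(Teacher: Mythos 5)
Your reduction of the proposition to the single inequality $tri(L)\leq c(L)$ would indeed suffice formally, but that inequality is exactly where the argument breaks down, and the local claim you offer in its support is false. Whether a triangular corner at a crossing is an $\Omega_3$-triangle is not determined by the over/under data at that crossing alone: if the crossing has strand $u$ over strand $v$, the corner triangle closed off by a third strand $w$ fails to admit an $\Omega_3$-type move only in the single cyclic configuration ($u$ over $v$, $v$ over $w$, $w$ over $u$); in the other three assignments of $w$'s two crossings the triangle \emph{is} an $\Omega_3$-triangle. Nothing local forbids all four corners of a crossing from simultaneously avoiding the cyclic configuration. Concretely, take the octahedral $4$-valent spherical graph (the shadow of the standard Borromean rings diagram: $6$ crossings, all $8$ faces triangular) and choose the crossings so that the three strands are totally ordered by "over"; then every one of the $8$ triangles is an $\Omega_3$-triangle, so all four corners of every crossing are $\Omega_3$-triangles, and the number of $\Omega_3$-triangles exceeds the number of crossings. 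That diagram is not minimal for the link it represents, but it shows that your incidence bound $3\,tri(L)\leq 3\,c(L)$ cannot follow from the purely local assertion you invoke, and you supply no argument that minimality of the diagram restores it.

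The paper sidesteps this entirely by not bounding the number of $\Omega_3$-triangles at all: it modifies the construction of Theorem~\ref{thm2} so that a single copy of the diagram $7s$ can neutralize two adjacent $\Omega_3$-triangles at once. Adjacent triangular faces of the shadow are greedily paired into quadrilaterals (a set $Q$), the leftover triangles form a pairwise edge-disjoint set $S$, and each connected sum is performed on the shared edge of a pair in $Q$ or on an edge of a triangle in $S$. An edge/face count ($3\#S\leq 2n-\#Q$ and $\#Q\leq \frac{n}{2}+1$) bounds the number of connected sums by $n+\frac{2}{3}$, hence by $n$ since it is an integer, giving at most $7n+n=8n$ crossings. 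If you want to keep your cleaner "substitute into Theorem~\ref{thm2}" strategy, you would need an honest proof that some minimal diagram of $L$ has at most $c(L)$ $\Omega_3$-triangles, which neither your sketch nor the paper provides.
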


\begin{proof}
	Let $D'$ be a minimal diagram for $L$, having $n=c(L)$ crossings. 
	
	By the Euler characteristic formula for spherical $4$-valent graphs (here connected projections of the links), the number of all regions in $D'$ is $n+2$, and the number of edges in $2n$. Let us consider only triangular regions in the graph $G$ (being a connected shadow for $D'$), We glue together any two such triangles to form a quadrilateral, and then we proceed to any other possible pairs, etc. and we end with only triangles isolated from each other. Call the set of such isolated triangles by $S$, and the set of constructed quadrilaterals by $Q$. We can now apply Theorem\;\ref{thm2} for only $(\Omega_3\text{-triangles})$ in $S$ and for one $(\Omega_3\text{-triangles})$ for each quadrilateral in $Q$ making the connected sum in an edge used to glue our quadrilateral from two triangles, preventing both of them to be $(\Omega_3\text{-triangles})$. At least one (the diagonal) edge in each element of $Q$ is not the edge of any element of $\#S$ and we have a total of $2n$ in a graph in $G$, therefore $\#S\leq\frac{2n-\#Q}{3}$. We also know that each element in $Q$ covers two faces of the starting graph $G$ with $n+2$ faces, so $\#Q\leq\frac{n}{2}+1$.
	We can now count and bind the number of required connected sum operations $CS$ as follows:
	$$CS=\#S+\#Q\leq\frac{2n-\#Q}{3}+\#Q=\frac{2}{3}(n+\#Q)\leq \frac{2}{3}(n+\frac{n}{2}+1)=n+\frac{2}{3}.$$
	The number $CS$ is an integer so we obtain $CS\leq n=c(L)$.
		
\end{proof}

We see that in the proof of the previous proposition, the construction leads us to generally not prime diagram. It is natural then to search for possibly prime rigid hard diagrams for a given knot or link with fewer crossings.
\par
In Figure\;\ref{exe1} it is shown an example of a minimal rigid hard diagram of the trefoil diagram. Notice that it is a prime diagram and it has $9$ crossings while the minimal diagram of the trefoil has $3$ crossings.

\begin{figure}[ht]
	\includegraphics[width=0.6\textwidth]{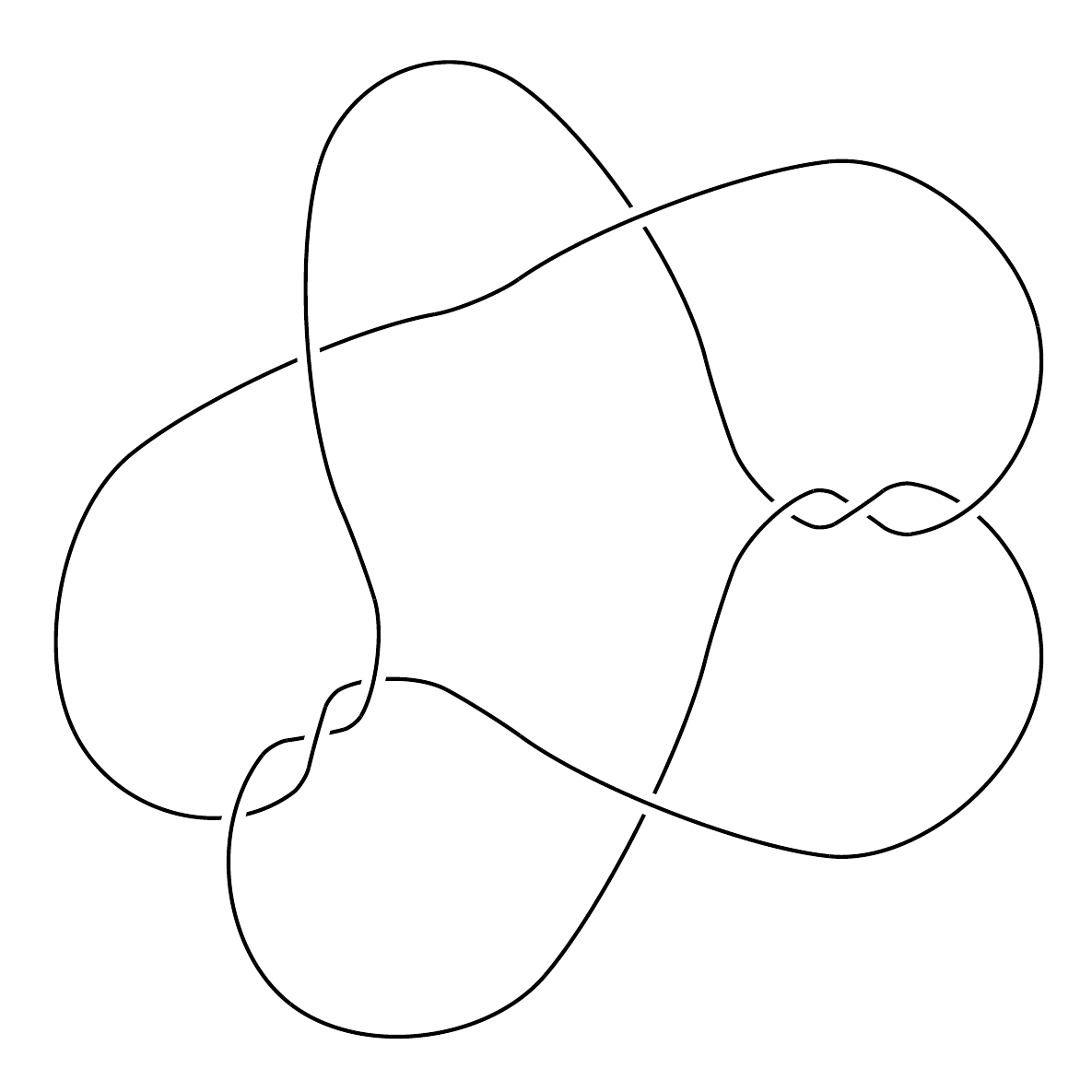}
	\caption{An example of a minimal rigid hard diagram of the trefoil knot.\label{exe1}}
\end{figure}

The \emph{rigid hard index} of a link $L$, denoted by $ind_{rh}(L)$ is the difference between the number of crossings in a minimal (with respect to the number of crossing) rigid hard diagram for $L$ and the crossing number of $L$.

From the previous examples of prime minimal rigid hard diagrams of trivial links. We can conclude directly that: 
$$ind_{rh}(T_1)=9,\; ind_{rh}(T_2)=8,\; ind_{rh}(T_3)=12.$$

From Thm.\;\ref{thm2} we have in particular that for any alternating (or generally for links not having any $\Omega_3$ triangle in a minimal diagram) non-trivial non-split links $L$, we have:

$$ind_{rh}(L)\leqslant 7.$$

We also directly know that for any non-split non-trivial link $L$, we have:

$$ind_{rh}(L)\leqslant 7\cdot c(L).$$

We computationally generate (with checking all diagrams) the following Table\;\ref{table2} of the rigid hard index of prime knots and links. The table consists of prime knots and links with the crossing number less than $8$.
\par
We consider links (and their names) up to the mirror image because the rigid hard index of a given link and the rigid hard index of its mirror image are the same. 

\begin{footnotesize}
	\renewcommand{\arraystretch}{1.25}
	\begin{center}
		
		\begin{longtable}[ht]{r||l}
			\caption{Knots and links and the rigid hard index\label{table2}}\\
			$ind_{rh}(L)$	&  names of prime knots or links $L$ with $c(L)\leq 8$\\
			\hline
			\endfirsthead
			\multicolumn{2}{c}
			{\tablename\ \thetable\ -- \textit{Continued from previous page}} \\
			$ind_{rh}(L)$	 & names of prime knots or links $L$ with $c(L)\leq 8$\\
			\hline
			\endhead
			\hline \multicolumn{2}{r}{\textit{Continued on next page}} \\
			\endfoot
			\hline
			\endlastfoot
			$1$& L5a1, K6a1, K6a2, L6a5, K7a1, K7a2, K7a6, L7a2, L7a3, L7a4, L7a5, L7a6, L7a7, L7n2,  \\
			&
			K8a1, K8a2, K8a3, K8a4, K8a5, K8a6, K8a7, K8a8, K8a9, K8a10, K8a13, K8a16, K8a17,\\&
			 K8n1, K8n2, K8n3,
			 L8a1, L8a2, L8a3, L8a4, L8a5, L8a6, L8a8, L8a9, L8a10, L8a15,\\&
			 L8a17, L8a18, L8a20, L8a21, L8n1, L8n2, L8n4, L8n7 
			\\
			\hline

			$2$& L6a1, L6a3, L6a4, K7a3, K7a4, K7a7, L7a1,  K8a11, K8a12, K8a14, L8a7, L8a11, L8a13,\\& L8a16, L8a19, L8n3, L8n5, L8n6
			\\

			\hline
			$3$&
			K6a3, L6n1, K7a5, K8a15, K8a18
			
			\\
			\hline
			$4$&
			K5a2
			
			\\
			
			\hline
			$5$& 
			L4a1, K5a1
			\\
			\hline
			$6$& 
			K3a1, K4a1
			\\
			\hline
			$7$& 
			L2a1
			\\
		\end{longtable}
	\end{center}
	
\end{footnotesize}

\section{Shaky hard diagram}

From the introduction section, we know that any hard diagram of any link must be a non-alternating diagram. In the case of shaky hard diagrams, it can be deduced directly because the opportunity to make the Reidemeister III move requires that at least one strand (involved in the move) have two crossings that do not alternate.

\begin{theorem}
	There exists a shaky hard diagram of the unknot and unlink of arbitrary many components.
\end{theorem}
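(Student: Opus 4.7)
The plan is to prove the theorem by explicit exhibition: first construct a shaky hard diagram $H$ of the unknot, and then extend it to an $n$-component unlink by adding trivial circles disjointly.

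For the unknot step, I would exhibit a reduced non-alternating unknot diagram $H$ with $c(H)>0$ crossings that (i) requires at least one crossing-increasing Reidemeister move in any sequence simplifying it to the zero-crossing diagram, and (ii) contains at least one $\Omega_3$-triangle in the sense of Figure \ref{triangle_regions}. One natural route is to start from a known rigid hard unknot diagram such as $h9a$ from Theorem \ref{thm1} and perform a local tangle replacement that creates an $\Omega_3$-triangle without opening up any new $\Omega_1^{\downarrow}$ or $\Omega_2^{\downarrow}$ descent that would simplify the diagram. Alternatively, one can computationally search small non-alternating reduced unknot diagrams for one whose decreasing-move tree contains only hard leaves but which nevertheless admits an $\Omega_3$-move; this is in line with the computer-assisted tables already used in the paper.

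For the multi-component case, given $n \geq 2$, form $D_n$ as the disjoint union of $H$ with $n-1$ simple closed curves placed in distinct regions of $\mathbb{S}^2 \setminus H$. Then $D_n$ represents the $n$-component unlink, is reduced, and has $c(D_n)=c(H)>0$ crossings. Any simplification of $D_n$ to the minimal zero-crossing diagram must in particular simplify the $H$-component, which by hypothesis cannot be done without an increasing move, so $D_n$ is hard. Moreover, every $\Omega_3$-triangle of $H$ remains an $\Omega_3$-triangle of $D_n$, so $D_n$ is shaky. If one insists on a connected example, the same argument goes through by replacing the disjoint circles with connected sums along an edge of $H$ with small rigid hard diagrams $h8$ or $h12$ from Theorem \ref{thm1}, chosen so as not to create new $\Omega_2^{\downarrow}$ opportunities between the summands.

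The main obstacle is constructing and verifying $H$ itself. Hardness is the delicate property, because the presence of an $\Omega_3$-triangle makes the decreasing-move search tree genuinely branching, and one must certify that every descendant reached by a sequence of $\Omega_1^{\downarrow}$, $\Omega_2^{\downarrow}$, and $\Omega_3$ moves without an intervening increasing move still has crossing number at least $c(H)$. Unlike the rigid case treated in Theorem \ref{thm1}, no purely local obstruction rules out simplification, so I expect an exhaustive computational check of this finite tree to be essentially necessary.
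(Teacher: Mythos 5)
Your overall strategy --- exhibit one shaky hard unknot diagram $H$ and then propagate it to more components --- is exactly the shape of the paper's argument, and your multi-component step is sound: the paper likewise takes connected sums with copies of $h8$ from Figure~\ref{hard09}, placed away from the distinguished triangular region so that no new $\Omega_2^{\downarrow}$ or $\Omega_3$ opportunities are created (your disjoint-union variant also satisfies the paper's definition of hardness, since the zero-crossing extra components cannot participate in any non-increasing move). The genuine gap is in the first step: for an existence theorem proved by explicit example, you never actually produce $H$; you only describe two ways one might search for it and correctly flag that certifying hardness is the delicate point. That certification is the entire content of the theorem, so as written the proposal does not establish it.

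What you are missing is the design principle that makes the hardness verification finite and essentially immediate, rather than the ``exhaustive computational check of a branching tree'' you anticipate. The paper's example (Figure~\ref{exe2}, a $10$-crossing prime unknot diagram whose triviality can be confirmed via the Jones polynomial) has \emph{exactly one} $\Omega_3$-triangle and admits no $\Omega_1^{\downarrow}$ or $\Omega_2^{\downarrow}$ move; performing that single $\Omega_3$ move yields a diagram which again has exactly one $\Omega_3$-triangle and no decreasing moves, and whose only $\Omega_3$ move returns to the original diagram. Hence the orbit of the diagram under non-increasing moves has size two and never drops below $10$ crossings, so the diagram is hard, while the presence of the $\Omega_3$-triangle makes it shaky by definition. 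If you revise, aim for an example engineered so that the non-increasing move graph is this small closed loop; then hardness follows by inspection instead of by a large search.
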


\begin{proof}

In Figure\;\ref{exe2} it is shown an example of a prime shaky hard diagram of the unknot (it has $10$ crossing so one can test the triviality by calculating its Jones polynomial). The case for two-component unlink examples are shown in Figure\;\ref{exe3} (notice that those diagrams of unlinks differ by crossing changes at two crossings). 
\par
They are all shaky diagrams because each has a triangular region, marked by the letter T that has the opportunity to make the $\Omega_3$-type move. They are also hard because each diagram has exactly one such triangular region and after performing the $\Omega_3$-type move it can be easily checked that the resulting diagrams have also exactly one triangular region, that has the opportunity to make the $\Omega_3$-type move and no opportunity to make the $\Omega_1^{\downarrow}$ or $\Omega_2^{\downarrow}$ move. Therefore to try to reduce it further without increasing the number of crossings one has the potential move to make on that one triangular region resulting in returning to the original started diagram.
\par
For the shaky hard diagram of the unlink of arbitrary many components, it is sufficient to take a diagram from Figure\;\ref{exe2} and make connected sums of with the sufficient amount of the diagram $h8$ from Figure\;\ref{hard09}, with the connection arc being the far from (i.e. not touching) the region $T$ and its neighbor regions.

\begin{figure}[ht]
	\includegraphics[width=0.5\textwidth]{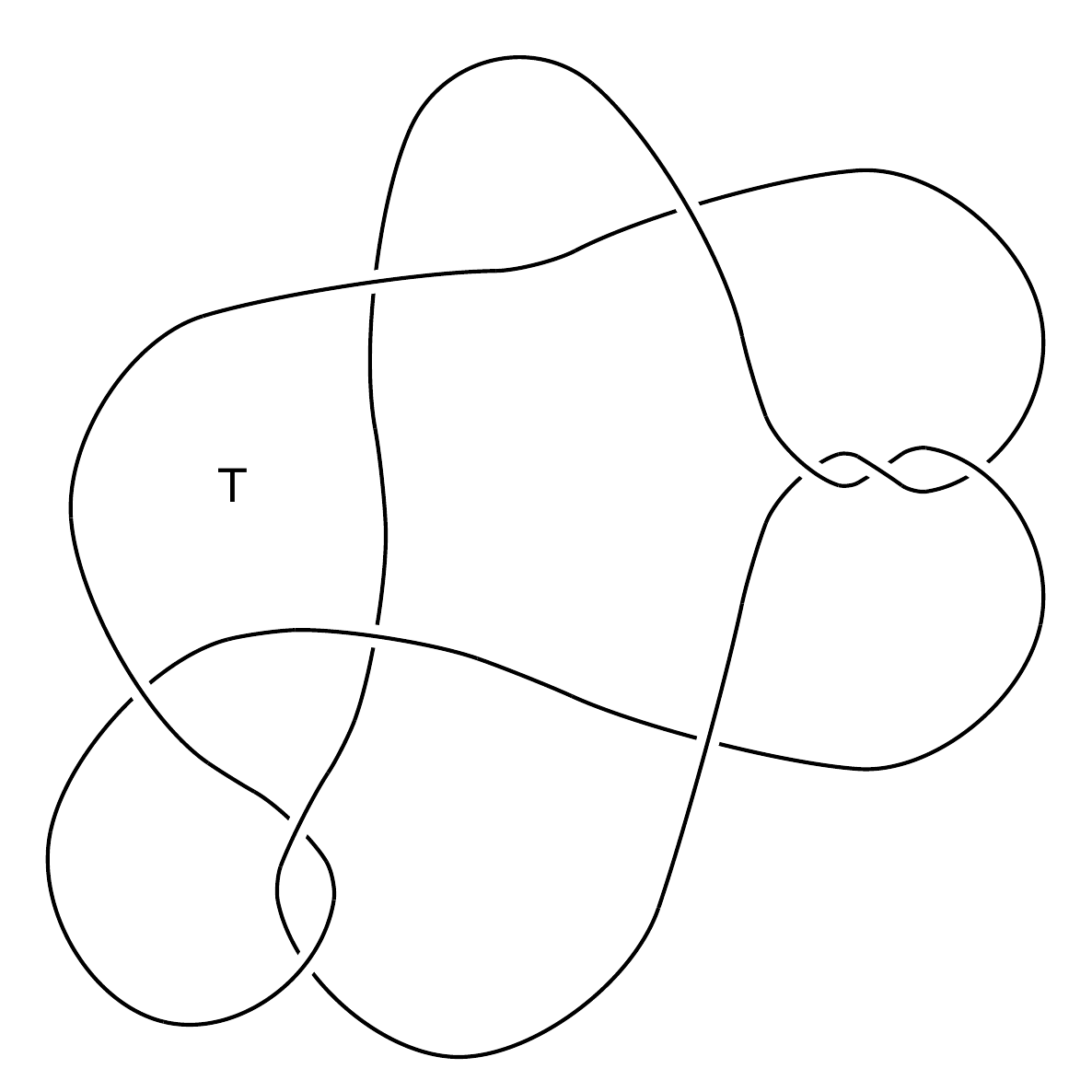}
	\caption{An example of a prime shaky hard diagram of the trivial knot.\label{exe2}}
\end{figure}

\begin{figure}[ht]
	\includegraphics[width=0.44\textwidth]{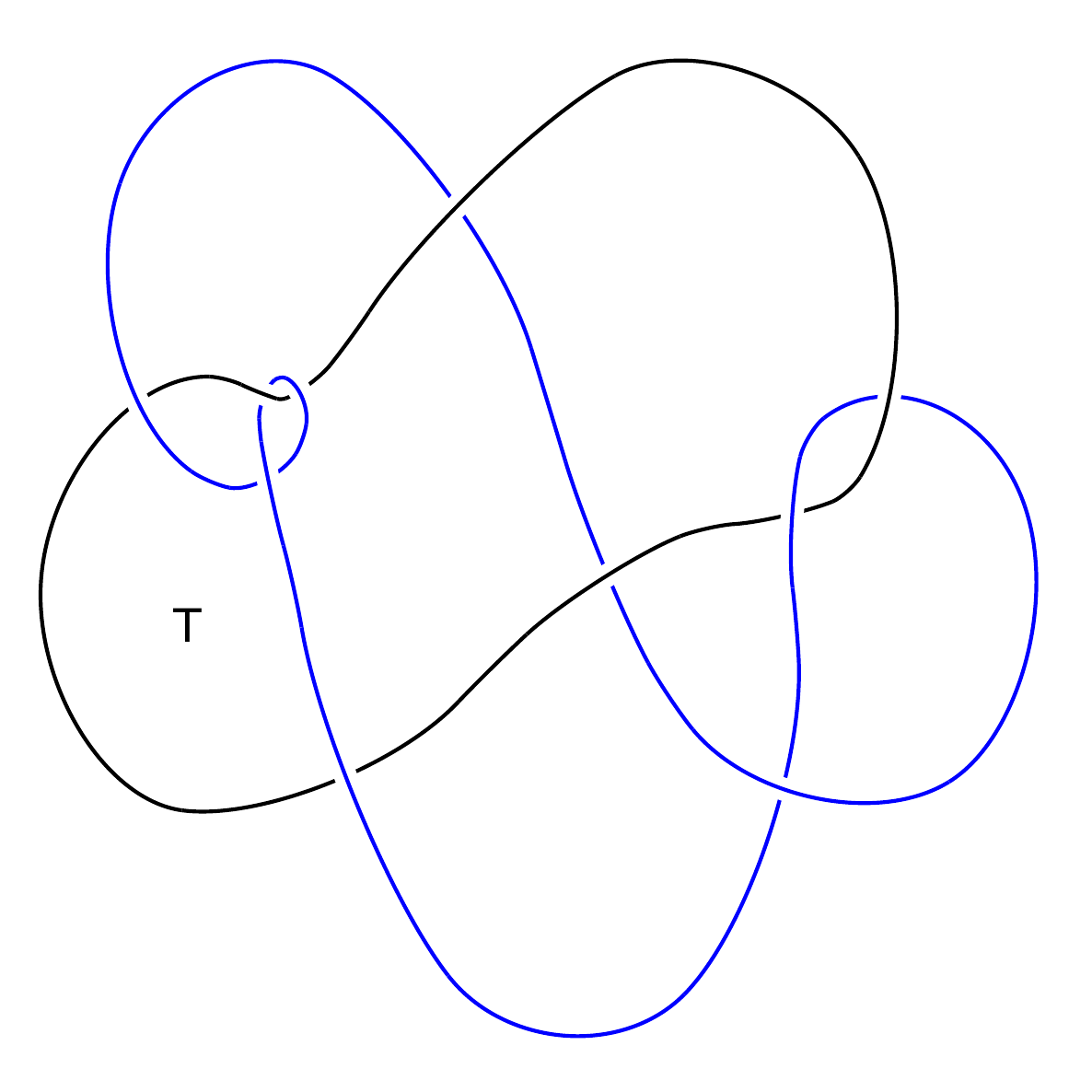}
	\includegraphics[width=0.44\textwidth]{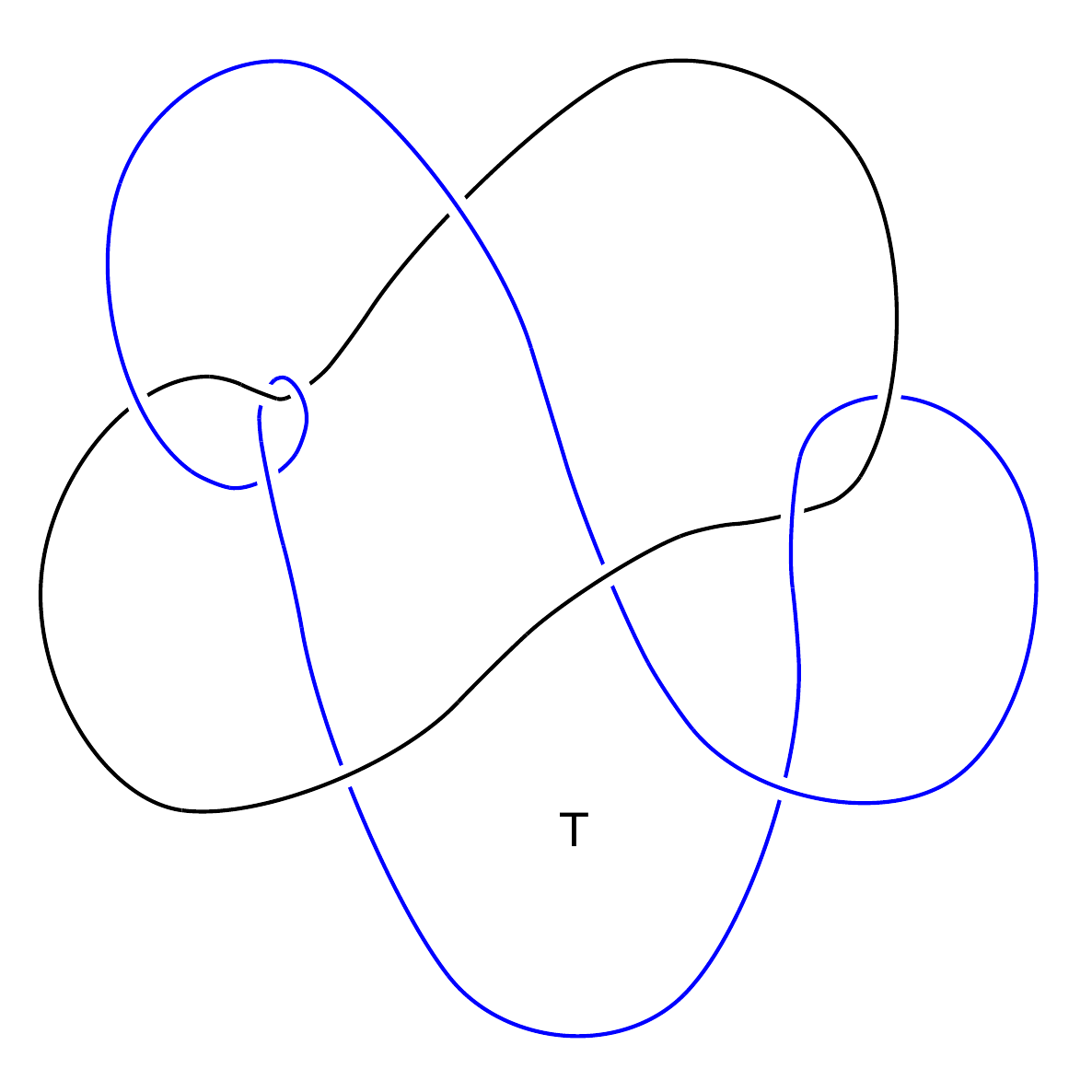}
	\caption{Examples of a prime shaky hard diagram of a trivial link.\label{exe3}}
\end{figure}
	
\end{proof}

\begin{figure}[ht]
	\includegraphics[width=0.47\textwidth]{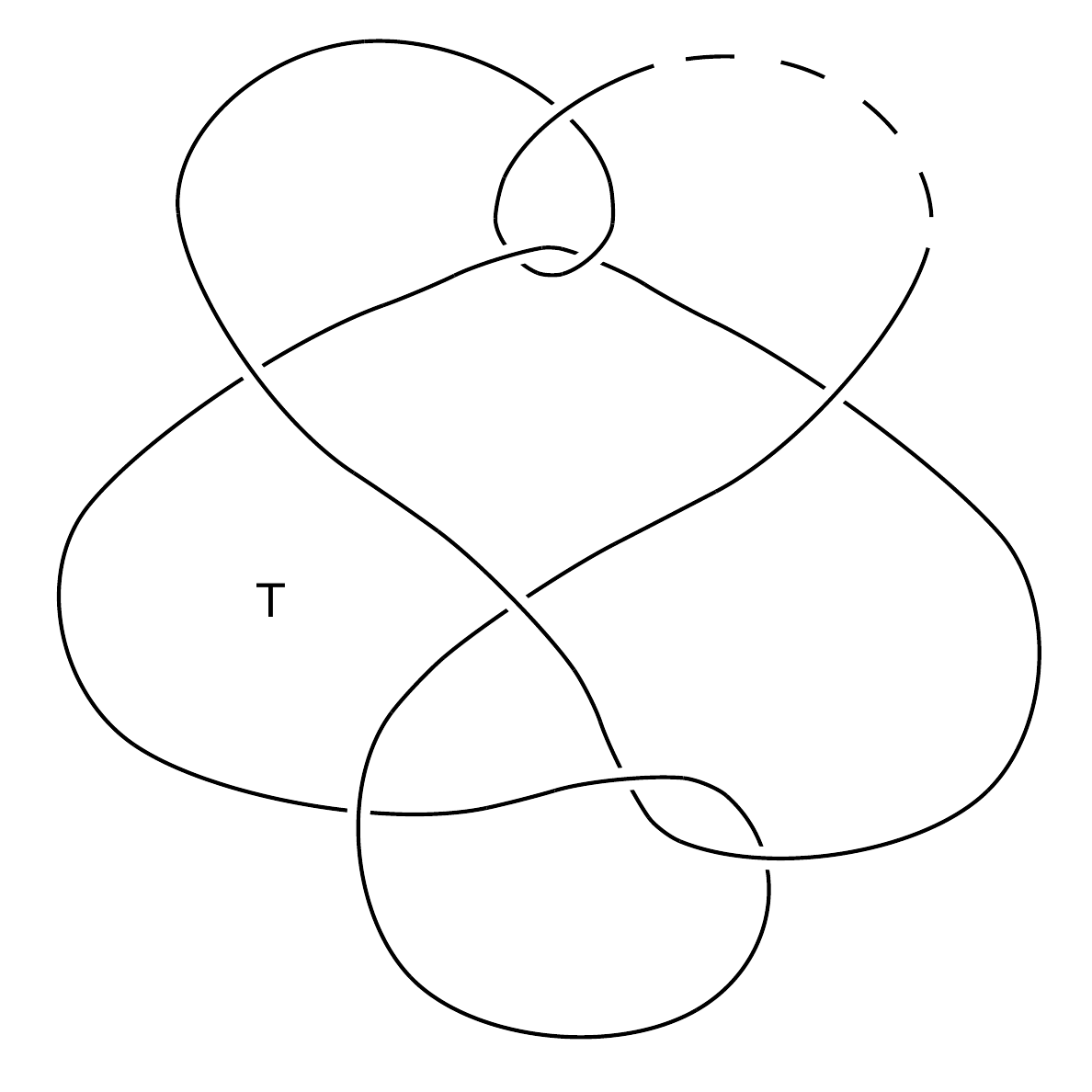}
	\caption{The diagram $9s$.\label{9s}}
\end{figure}

By a similar construction as in Theorem\;\ref{thm2} but instead of one diagram $7s$ we take the connected sum with the diagram $9s$ shown in Figure\;\ref{9s} (along the dashed line), we obtain the following.

\begin{proposition}\label{prop1}
	Any link $L$ has a diagram $D$ that is a shaky hard diagram. Moreover, we can choose $D$ such that we have: 
	$$ \#\text{crossings}(D)\leq 7\cdot tri(L)+2+c(L),$$ for any non-split non-trivial link $L$, where $tri(L)$ is the minimal number of $(\Omega_3\text{-triangles})$ in a minimal diagram of $L$, or is equal to $1$ where there are no such triangles.
\end{proposition}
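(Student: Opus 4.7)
The plan is to mimic the construction in the proof of Theorem~\ref{thm2}, but to replace exactly one of the connected sums there by a connected sum with the diagram $9s$ from Figure~\ref{9s}. The diagram $9s$ plays the role of $7s$ except that, after the connected sum along its dashed edge, it is designed to leave behind a single surviving $\Omega_3$-triangle and no opportunity for an $\Omega_1^{\downarrow}$ or $\Omega_2^{\downarrow}$ move. The two extra crossings of $9s$ over $7s$ (that is, $9-7=2$) account for the additive $+2$ in the stated bound.

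Fix a non-split non-trivial link $L$ and a minimal diagram $D'$ for $L$; as in Theorem~\ref{thm2} we may assume $D'$ is connected and has $c(L)\geq 2$ crossings. If $D'$ contains no $\Omega_3$-triangle (so that by the stated convention $tri(L)=1$), I would form the connected sum of $D'$ with one copy of $9s$ along an arbitrary edge of $D'$, yielding $c(L)+9=c(L)+7\cdot 1+2$ crossings. If $D'$ contains $k=tri(L)\geq 1$ many $\Omega_3$-triangles, I would perform connected sums with $7s$ at $k-1$ of them (exactly as in Theorem~\ref{thm2}, along the dashed edge of $7s$ and one edge of the triangle) and one connected sum with $9s$ at the remaining one; the total number of crossings is then $c(L)+7(k-1)+9=c(L)+7k+2$, matching the bound.

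The core of the argument is to verify that the resulting diagram $D$ is shaky hard. The proof of Theorem~\ref{thm2} already shows that the connected sums with $7s$ create neither new $\Omega_3$-triangles nor new $\Omega_1^{\downarrow}$ or $\Omega_2^{\downarrow}$ opportunities; the $9s$ connected sum must be verified to have the analogous property, except that it deliberately leaves alive exactly one $\Omega_3$-triangle, so that $D$ is shaky. The main obstacle I expect is the final check that $D$ is still hard: namely, that performing the one available $\Omega_3$-move on $D$ yields a diagram which again admits no $\Omega_1^{\downarrow}$ and no $\Omega_2^{\downarrow}$ move, so that $D$ cannot be simplified without first increasing the crossing number. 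This is a purely combinatorial inspection of Figure~\ref{9s} and of the single diagram obtained from it by one $\Omega_3$-move, entirely analogous to the explicit checks performed for the diagrams in Figures~\ref{exe2} and~\ref{exe3} earlier in this section.
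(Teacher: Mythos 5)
Your proposal matches the paper's own (very brief) argument: the paper proves Proposition~\ref{prop1} in a single sentence by saying that one repeats the construction of Theorem~\ref{thm2} but replaces one of the connected summands $7s$ by the diagram $9s$, which is exactly your construction and exactly accounts for the $+2$ in the bound. Your write-up is in fact more explicit than the paper's about the remaining combinatorial checks on $9s$ (that it leaves exactly one $\Omega_3$-triangle and that the diagram stays hard after that move), which the paper leaves implicit by analogy with Figures~\ref{exe2} and~\ref{exe3}.
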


\pagebreak

\end{document}